\DeclareMathOperator{\rank}{rank}
\DeclareMathOperator{\Gal}{Gal}
\DeclareMathOperator{\red}{red}
\newcommand{\Q}{\mathbf{Q}}
\newcommand{\Z}{\mathbf{Z}}
\newtheorem{theorem}{Theorem}[section]
\newtheorem{lemma}[theorem]{Lemma}
\newtheorem{proposition}[theorem]{Proposition}
\theoremstyle{definition}
\theoremstyle{remark}
\newtheorem{remark}[theorem]{Remark}
\title[Mordell--Weil group of a rational elliptic surfaces]{Determining explicitly the Mordell--Weil group of  certain rational elliptic surfaces}
\author{Remke Kloosterman}
\thanks{The author would like to thank the anonymous referees for several comments on a previous version of this paper.
The author is a member  of INdAM-GNSAGA. This work was supported  by the UNIPD BIRD-SID-2024 
project ``Moduli spaces and positivity".}
\begin{document}
\begin{abstract}

Let $A,B$ be nonzero rational numbers. Consider the elliptic curve
$E_{A,B}/\Q(t)$ with Weierstrass equation $y^2=x^3+At^6+B$.

An algorithm to determine $\rank E_{A,B}(\Q(t))$ as a function of $(A,B)$ was presented in a recent paper \cite{Bartosz}.
We will give a different and shorter proof for the correctness of that algorithm, using a more geometric approach and discuss for which classes of examples the approach by \cite{Bartosz} might be useful. 
\end{abstract}
\maketitle

\section{Introduction}
Let $A,B$ be nonzero rational numbers. Consider the elliptic curve
$E_{A,B}/\Q(t)$ with Weierstrass equation $y^2=x^3+At^6+B$.

In this paper we present a short proof for the correctness of the algorithm from \cite{Bartosz} by Desjardins and Naskr\k{e}cki
 to calculate $\rank E_{A,B}(\Q(t))$ as a function of $A$ and $B$. 
The originally  proof is quite lengthy, it is the heart of a 44-page paper and uses heavy computations.
We will give a different and shorter proof for the correctness of that algorithm, using a more geometric approach. This approach is similar to the approaches taken in \cite{ToMe} and \cite{KloKuw} to study the Mordell--Weil group of elliptic surfaces with several automorphisms.

Recall that the Weierstrass equation for curve $E_{A,B}/\Q(t)$ defines also a rational elliptic surface $S$ over $\Q$. 
The main idea is to establish four further rational elliptic surfaces, each having geometric Mordell--Weil rank 2 and each admitting a base change which is birational to $S$. We then show that the direct sum of the four Mordell--Weil groups has finite index in the Mordell--Weil group of $E_{A,B}$. It turns out that each of the four groups can be calculated easily.

\section{Calculation of the Mordell--Weil group}
Fix $A,B\in \Q^*$. Let $E_{A,B}/\Q(t)$ be the elliptic curve given by $y^2=x^3+At^6+B$. 
Let $K$ be a field of characteristic zero. Let $\omega \in\overline{\Q}$ be a fixed primitive third root of unity.

\begin{proposition}  We have $E_{A,B}(K(t))\cong E_{B,A}(K(t))$ as abelian groups. Moreover, for every $u,v \in K^*$ we have
\[ E_{A,B}(K(t))\cong E_{u^6A,v^6B}(K(t))\]
as abelian groups.
\end{proposition}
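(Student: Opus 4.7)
The plan is to exhibit, in each case, an explicit group isomorphism built from two elementary operations: first, applying a $K$-algebra automorphism of $K(t)$ to a $K(t)$-rational point coordinate-wise; and second, performing a Weierstrass change of variables of the form $(x,y) \mapsto (c^2 x, c^3 y)$ with $c \in K(t)^*$. The first operation preserves the group law because $E_{A,B}$ is defined over $K$ and addition on a Weierstrass model is given by polynomials over $K$; the second preserves the group law because it is an isomorphism of pointed elliptic curves.

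For the first isomorphism, I would use the involution $\sigma \colon t \mapsto 1/t$ of $K(t)$. Applying $\sigma$ to the equation of $E_{A,B}$ gives $y^2 = x^3 + At^{-6} + B$, and the Weierstrass substitution $x = X/t^2$, $y = Y/t^3$ (followed by multiplication of the equation by $t^6$) yields $Y^2 = X^3 + A + Bt^6$, which is precisely the Weierstrass equation of $E_{B,A}$.

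For the second isomorphism, with $u, v \in K^*$ given, I would use the automorphism $\tau \colon t \mapsto (u/v)t$, transforming the equation into $y^2 = x^3 + (u/v)^6 A t^6 + B$, followed by the Weierstrass change $x = X/v^2$, $y = Y/v^3$ (and multiplication by $v^6$) to arrive at $Y^2 = X^3 + u^6 A t^6 + v^6 B$, the equation of $E_{u^6A,\,v^6B}$.

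There is no significant obstacle; the proposition reduces to writing down these substitutions and observing that each induces a group isomorphism. The only small subtlety worth noting is that the conclusion is an isomorphism of abelian groups, not of elliptic curves over $K(t)$ itself: this is exactly what legitimizes the use of the nontrivial $K$-automorphisms $\sigma$ and $\tau$ of the base field $K(t)$.
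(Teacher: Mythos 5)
Your proposal is correct and follows essentially the same route as the paper: the paper performs the combined substitutions $y=y'/t'^3$, $x=x'/t'^2$, $t=1/t'$ (resp.\ $y=y'/u^3$, $x=x'/u^2$, $t=t'v/u$) in one step, which is exactly your automorphism of $K(t)$ composed with a Weierstrass change of variables. Your explicit remark on why each step induces a group isomorphism is a welcome (if minor) elaboration, not a deviation.
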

\begin{proof}
For the first statement take $y=y'/t'^3, x=x'/t'^2,t=1/t'$ and substitute this in the equation for $E_{A,B}$. Then we obtain a curve $E'$ with Weierstrass equation
\[y'^2=x'^3+A+B(t')^6.\]
Hence $E_{B,A}(K(t'))\cong  E_{A,B}(K(t)).$
For the second statement take $y=y'/u^3,x=x'/u^2, t=t'v/u$ and substitute this in the equation for $E_{A,B}$. Then we obtain a curve $E'$ with Weierstrass equation
\[y'^2=x'^3+Av^6(t')^6+Bu^6.\]
Hence  $E_{u^6A,v^6B}(K(t'))\cong  E_{A,B}(K(t)).$
\end{proof}
%\begin{remark} Note that the three associated  elliptic surfaces are isomorphic as fibered surfaces, but the generic fibers of the elliptic fibrations (as curves over the function field of the base curve) are only conjugated by an automorphism of the function field. I.e., $E_{A,B}(K(t))$ and $E_{u^6A,u^6B}(K(t))$ are isomorphic as groups, but the curves are not isomorphic. 
%\end{remark}

Recall that $E_{A,B}$ has complex multiplication by $\sqrt{-3}$, that is, $E_{A,B}$ admits the endomorphism (actually automorphism) $\tau: (x,y) \mapsto (\omega x,-y)$. For historical reasons we use $\omega$ both to indicate a fixed primitive third root of unity in $\overline{K}$  and the automorphism $-\tau=\tau^4$ of order 3.
 If $\omega\in K$ then $E_{A,B}(K(t))$ is a $\Z[\omega]$-module. 

In the following we will determine the group structure of the geometric Mordell--Weil group $E_{A,B}(\overline{\Q}(t))$. This can be obtained by very standard techniques. I.e., the rank can be expressed in the number of additive and of multiplicative fibers and the torsion group can be deterimined from the singular fibers. This has been carried out by several authors in the literature. In our case we obtain the following result:

\begin{lemma} The $\Z[\omega]$-module $E_{A,B}(\overline{\Q}(t))$ is free of rank 4. In particular, the geometric Mordell--Weil rank equals 8.
\end{lemma}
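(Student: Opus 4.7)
The plan is to compute the geometric Mordell--Weil rank via the Shioda--Tate formula, establish torsion-freeness from the singular fiber configuration, and then upgrade to the $\Z[\omega]$-module structure using the fact that $\Z[\omega]$ is a PID.

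First, I would analyze the singular fibers of the rational elliptic surface $S\to\mathbf{P}^1_{\overline{\Q}}$ attached to the given Weierstrass equation. The discriminant is a nonzero scalar multiple of $(At^6+B)^2$, and since $A,B\in\Q^*$ the polynomial $At^6+B$ has six distinct roots over $\overline{\Q}$. At each such root $t_0$ we have $a_4=0$ and $v_{t_0}(a_6)=1$, so Tate's algorithm produces a fiber of Kodaira type II. At $t=\infty$, the substitution $t=1/s$, $x=X/s^2$, $y=Y/s^3$ transforms the equation into $Y^2=X^3+A+Bs^6$, which is smooth at $s=0$. Hence $S$ has exactly six singular fibers, all of type II, and all irreducible.

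Next I would apply Shioda--Tate. For a rational elliptic surface $\rho(S)=10$ and
\[ 10 = 2 + \sum_v (m_v-1) + \rank E_{A,B}(\overline{\Q}(t)); \]
with $m_v=1$ for every singular fiber this yields rank $8$. For torsion, the natural map from $E_{A,B}(\overline{\Q}(t))$ to the product of component groups $\prod_v\Phi_v$ has trivial target, since type II fibers have trivial component group. Consequently the full Mordell--Weil group coincides with its narrow subgroup, which by Shioda's theorem embeds as a sublattice of $NS(S)$ modulo the trivial lattice and is therefore torsion-free. Alternatively this is the $6\cdot\mathrm{II}$ entry of the Oguiso--Shioda classification, where the Mordell--Weil lattice is identified as $E_8$.

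Finally, the CM endomorphism $(x,y)\mapsto(\omega x,-y)$ makes $E_{A,B}(\overline{\Q}(t))$ into a $\Z[\omega]$-module. Any element killed by a nonzero $\alpha\in\Z[\omega]$ is killed by $N(\alpha)\in\Z_{>0}$, hence is zero; so the module is torsion-free over the PID $\Z[\omega]$, and being finitely generated of $\Z$-rank $8$, it is free of $\Z[\omega]$-rank $4$.

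The step I expect to require the most care is the torsion-freeness argument, since it is tempting to read off only the rank from Shioda--Tate and forget about torsion. If one wishes to avoid invoking the Oguiso--Shioda tables, the cleanest route is the component-group argument above, but this must be supplemented by the fact that the narrow Mordell--Weil subgroup is genuinely torsion-free, not merely of finite index in the full group.
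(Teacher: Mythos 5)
Your proof is correct and follows the same route as the paper: identify the surface as rational with six type II fibers, apply Shioda--Tate to get $E_{A,B}(\overline{\Q}(t))\cong\Z^8$, and conclude freeness of $\Z[\omega]$-rank $4$ since $\Z[\omega]$ is a PID. You simply fill in the details the paper leaves to the reader (the fiber-type computation, the torsion-freeness via the trivial component groups), which is a welcome elaboration but not a different argument.
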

\begin{proof}
The discriminant of $E_{A,B}$ equals a constant times $(At^6+B)^2$. Hence there are six singular fibers. Since the valuation of the discriminant is two at each singular fiber and the $j$-invariant is zero, we have six $II$-fibers. It follows now from  \cite[Theorem 8.8]{SchShi} that $E_{A,B}(\overline{\Q}(t))\cong \Z^8$ as abelian groups.

To show that $E(\overline{\Q}(t))$ is a free $\Z[\omega]$-module of rank 4, let $P\in E(\overline{\Q}(t)), P\neq O$ and $a,b\in \Z$ and suppose that $(a+b\omega)P=O$ holds in $E_{A,B}(\overline{\Q}(t))$. Then $(a^2+b^2-ab)P=(a+b\omega^2)(a+\omega b) P=O$. Since $E(\overline{\Q}(t))$ is torsion-free as $\Z$-module this implies that 
\[\left(a-\frac{1}{2}b\right)^2+\frac{3}{4}b^2=a^2+b^2-ab=0,\] from which it follows that $a=b=0$.
\end{proof}

We are now interested in the action of $\Gal(\overline{\Q}/\Q)$ on $E_{A,B}(\overline{\Q}(t))$. We will show that  this Galois-module is the direct sum of four Galois modules (each of $\Z$-rank 2). For this  we consider for a positive integer $m$ and a  integer $\/0\leq k\leq 5$   the elliptic curve $E_{A,B,k,m}/\Q(s)$ given by 
\[ y^2=x^3+s^k(As^m+B).\]
We will mainly focus on the case $m=1$ and $1\leq k \leq 4$.  
\begin{lemma}\label{lemrk} For $1\leq k \leq 4$ we have that
$E_{A,B,k,1}(\overline{\Q}(s))$ is a free  $\Z[\omega]$-module of rank one.
\end{lemma}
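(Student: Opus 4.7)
My plan is to analyze the rational elliptic surface $S_k$ associated to $E_{A,B,1,k}$ (with Weierstrass equation $y^2 = x^3 + s^k(As+B)$) for each $k \in \{1,2,3,4\}$, apply the Shioda--Tate formula to pin down the $\Z$-rank of the Mordell--Weil group, and then use the $\Z[\omega]$-action coming from the automorphism $(x,y) \mapsto (\omega x, -y)$ to upgrade this to a $\Z[\omega]$-rank.

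First I would determine the singular fibers. The discriminant is proportional to $s^{2k}(As+B)^2$, so only $s=0$, $s=-B/A$, and $s=\infty$ can carry singular fibers. At $s=-B/A$ the valuation of $a_6=s^k(As+B)$ equals $1$, so the fiber is of type $\mathrm{II}$. For a Weierstrass equation $y^2 = x^3 + c$ with $v_{s_0}(c) = n \in \{1,2,3,4\}$, Tate's algorithm (or a direct blow-up computation) gives the Kodaira types $\mathrm{II}, \mathrm{IV}, I_0^*, \mathrm{IV}^*$ respectively, and the model is already minimal. Hence at $s=0$ we obtain fibers of types $\mathrm{II}, \mathrm{IV}, I_0^*, \mathrm{IV}^*$ for $k=1,2,3,4$ respectively. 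After the change of variables $s=1/s'$, $x=x'/s'^2$, $y=y'/s'^3$, the equation becomes $y'^2 = x'^3 + As'^{5-k} + Bs'^{6-k}$, so $v_{s'=0}(a_6) = 5-k$, giving the symmetric types $\mathrm{IV}^*, I_0^*, \mathrm{IV}, \mathrm{II}$ at $s=\infty$. In every case the Euler numbers of the singular fibers sum to $12$, so each $S_k$ is a rational elliptic surface.

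Next I would apply \cite[Theorem 8.8]{SchShi}. In each of the four cases the fiber components contribute $\sum_v (m_v - 1) = 6$ (namely $0+0+6$, $2+0+4$, $4+0+2$, or $6+0+0$), yielding $\rank E_{A,B,1,k}(\overline{\Q}(s)) = 10 - 2 - 6 = 2$. To finish the proof I must rule out torsion. The torsion subgroup of the Mordell--Weil group of a rational elliptic surface injects into the product of the component groups of the singular fibers, and in every case above this product is purely $2$- and $3$-primary. A nontrivial $2$-torsion point on $y^2 = x^3 + s^k(As+B)$ would force $s^k(As+B)$ to be a cube in $\overline{\Q}(s)$; and via the $3$-division polynomial $\psi_3 = 3x(x^3 + 4 s^k(As+B))$, a nontrivial $3$-torsion point would force $s^k(As+B)$ to be a square, or simultaneously a square and a cube. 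All of these are excluded because the linear factor $(As+B)$ appears in $s^k(As+B)$ with multiplicity exactly $1$.

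Consequently $E_{A,B,1,k}(\overline{\Q}(s))$ is a torsion-free abelian group of $\Z$-rank $2$ carrying a $\Z[\omega]$-action; since $\Z[\omega]$ is a principal ideal domain of $\Z$-rank $2$, it is free of $\Z[\omega]$-rank $1$. The main obstacle I anticipate is the careful bookkeeping of Kodaira types across the four values of $k$ at both $s=0$ and $s=\infty$; once the fiber types are in hand, the rank computation and the torsion exclusion are short.
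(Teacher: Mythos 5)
Your proof is correct and follows essentially the same route as the paper: identify the singular fibers of the rational elliptic surface attached to $E_{A,B,1,k}$ and apply the Shioda--Tate formula to get geometric Mordell--Weil rank $2$, hence $\Z[\omega]$-rank $1$. You are more explicit than the paper in naming the Kodaira types and in ruling out torsion (which the paper leaves implicit), but the underlying argument is the same.
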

\begin{proof}
Note that the discriminant of $E_{A,B,k.1}$ equals $s^{2k}(As+B)^2$ up to a constant.
The degree of the discriminant is $2(k+1)\leq 10$, hence we have three singular fibers at the zeroes of the discriminant ($s=0$, $s=-B/A$, $s=\infty$). These fibers have in total six components not intersecting the zero section.
From the Shioda-Tate formula (see e.g. \cite[Theorem 8.8]{SchShi}) it follows that the geometric Mordell--Weil rank equals 2, hence this group has rank one as a $\Z[\omega]$-module.
\end{proof}

\begin{lemma} Suppose $0\leq k \leq 5$ and let $m$ be a positive integer dividing $\gcd(6,k)$. Then $E_{A,B,k,m}(K(s))$ is a subgroup of $E_{A,B}(K(t))$.
\end{lemma}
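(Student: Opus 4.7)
The plan is to construct the injection $E_{A,B,k,m}(K(s))\hookrightarrow E_{A,B}(K(t))$ as a composition of two natural maps: a base-change pullback along a field embedding $K(s)\hookrightarrow K(t)$, followed by a Weierstrass change of variables over $K(t)$. The two divisibility assumptions $m\mid 6$ and $m\mid k$ bundled into $m\mid\gcd(6,k)$ will feed into these two steps respectively.

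First I would use $m\mid 6$ to define the $K$-algebra embedding $\phi:K(s)\hookrightarrow K(t)$ by $s\mapsto t^{6/m}$; this is well defined because $6/m$ is a positive integer. Pulling back a $K(s)$-point $(X(s),Y(s))$ of $E_{A,B,k,m}$ along $\phi$ produces $(X(t^{6/m}),Y(t^{6/m}))$, which by applying $\phi$ to the Weierstrass equation satisfies
\[ Y(t^{6/m})^2=X(t^{6/m})^3+t^{6k/m}(At^6+B). \]
This is a $K(t)$-point of the base-changed curve $\widetilde{E}:y^2=x^3+t^{6k/m}(At^6+B)$, and the assignment is an injective group homomorphism $E_{A,B,k,m}(K(s))\hookrightarrow\widetilde{E}(K(t))$, with injectivity coming from the fact that $\phi$ is an injection of fields.

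Next I would use $m\mid k$: writing $k=mj$ with $j\geq 0$, the exponent $6k/m=6j$ is a multiple of $6$, so the standard Weierstrass substitution $x=t^{2j}x'$, $y=t^{3j}y'$ transforms $\widetilde{E}$ into
\[ y'^2=x'^3+At^6+B, \]
which is exactly $E_{A,B}/K(t)$. Since $t$ is a unit in $K(t)$, this substitution is invertible and yields a $K(t)$-isomorphism $\widetilde{E}\cong E_{A,B}$; composing with the previous injection produces the desired embedding.

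I do not expect any real obstacle — the argument is essentially bookkeeping of the two divisibility conditions. The only minor point worth flagging is that the case $k=0$ (where $j=0$ and no Weierstrass substitution is needed) falls out of the same argument, so the statement is handled uniformly across all allowed pairs $(k,m)$.
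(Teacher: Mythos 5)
Your proof is correct and is essentially the paper's argument: your base change $s\mapsto t^{6/m}$ followed by the rescaling $x=t^{2j}x'$, $y=t^{3j}y'$ composes to exactly the single map $(f(s),g(s))\mapsto\bigl(f(t^{q})/t^{2r},\,g(t^{q})/t^{3r}\bigr)$ with $q=6/m$, $r=k/m$ that the paper writes down (modulo an apparent typo $g(t^{6q})$ for $g(t^{q})$ in the paper). Your two-step presentation just makes explicit where each divisibility hypothesis is used.
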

\begin{proof}
Let $q=6/m$, $r=k/m$. Then
\[(f(s),g(s))\mapsto \left(\frac{f(t^{q})}{t^{2r}},\frac{g(t^{q})}{t^{{3r}}}\right)  \]
defines an injective map $E_{A,B,k,m}(K(s))\to E_{A,B}(K(t))$.
\end{proof}

The pairs $(k,m)$ satisfying the hypothesis of this lemma are $m=1$ and $k\in \{0,1,2,3,4,5\}$;  $m=2$ and $k\in \{0,2,4\}$ and $m=3$ and $k\in \{0,3\}$.
We will show that four of the six curves with $m=1$ generate $E_{A,B}(K(t))$ up to finite index:

\begin{lemma}
We  have that
\[ \oplus_{k=1}^4 E_{A,B,k,1}(K(s))\]
is a subgroup of finite index of $E_{A,B}(K(t))$.
\end{lemma}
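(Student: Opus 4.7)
The plan is to exhibit a natural $\Z[\omega]$-linear automorphism $\Phi$ of $E_{A,B}(\overline{\Q}(t))$ whose ``eigenspaces'' separate the four subgroups. Since by Lemma~\ref{lemrk} each $E_{A,B,1,k}(\overline{\Q}(t))$ has $\Z$-rank $2$, while the preceding lemma gives $\Z$-rank $8 = 4\cdot 2$ for $E_{A,B}(\overline{\Q}(t))$, producing a direct-sum decomposition will immediately yield the finite-index statement by a rank count.

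Fix a primitive sixth root of unity $\zeta \in \overline{\Q}$ with $\zeta^2 = \omega$. The substitution $t \mapsto \zeta t$ preserves the Weierstrass equation $y^2 = x^3 + At^6 + B$ and so induces a group automorphism $\Phi$ of $E_{A,B}(\overline{\Q}(t))$ with $\Phi^6 = \mathrm{id}$; since the CM action does not involve $t$, $\Phi$ commutes with it and is therefore $\Z[\omega]$-linear. A point in the image of $E_{A,B,1,k}(\overline{\Q}(s))$ has the shape $(X,Y) = (f(t^6)/t^{2k},\, g(t^6)/t^{3k})$, and a direct substitution gives
\[
\Phi(X,Y) \;=\; (\zeta^{-2k} X,\,\zeta^{-3k} Y) \;=\; (\omega^{-k} X,\,(-1)^k Y),
\]
which is exactly the action on $(X,Y)$ of a specific unit $[u_k] \in \mathrm{Aut}(E_{A,B}) \cong \Z[\omega]^{\times}$. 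A short inspection (case-by-case for $k=1,2,3,4$) shows that the four units $u_1,u_2,u_3,u_4$ are pairwise distinct in the order-$6$ group $\Z[\omega]^{\times}$.

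The decisive step is then: if $m \in E_{A,B}(\overline{\Q}(t))$ lies in both $\ker(\Phi - [u_k])$ and $\ker(\Phi - [u_j])$ for $k \neq j$, then $(u_k - u_j)\cdot m = 0$; since $u_k - u_j$ is a nonzero element of $\Z[\omega]$ acting on a torsion-free $\Z[\omega]$-module, this forces $m = 0$. Hence the sum $\sum_{k=1}^4 E_{A,B,1,k}(\overline{\Q}(t))$ is direct, of total $\Z$-rank $8 = \rank E_{A,B}(\overline{\Q}(t))$, and the geometric finite-index claim follows.

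To descend to a general field $K$ of characteristic zero, take $P \in E_{A,B}(K(t))$ and use the geometric result to write $nP = \sum_{k=1}^4 Q_k$ with $Q_k$ in the image of $E_{A,B,1,k}(\overline{\Q}(t))$. Since every inclusion $E_{A,B,1,k} \hookrightarrow E_{A,B}$ is defined over $\Q$ and the direct-sum decomposition is unique, each $\sigma \in \Gal(\overline{\Q}/K)$ must fix the summands $Q_k$ individually, so $Q_k \in E_{A,B,1,k}(K(t))$. Therefore the cokernel of $\bigoplus_{k=1}^4 E_{A,B,1,k}(K(t)) \to E_{A,B}(K(t))$ is killed by $n$ and, being finitely generated, is finite. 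The main obstacle I foresee is the careful bookkeeping in the second paragraph: one has to be mindful that the automorphism $(x,y)\mapsto(\omega x,-y)$ used to put a $\Z[\omega]$-module structure on $E_{A,B}(K(t))$ has multiplicative order $6$, so the $u_k$ land in the full unit group $\{\pm 1, \pm\omega, \pm\omega^2\}$. Once distinctness of the four $u_k$ is settled, everything else is linear algebra over the PID $\Z[\omega]$ together with rank counting.
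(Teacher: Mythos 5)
Your proof is correct and follows essentially the same route as the paper: the paper's proof also pulls back along the order-six automorphism $t\mapsto -\omega t$, identifies the image of each $E_{A,B,1,k}$ inside an eigenspace for a unit of $\Z[\omega]$, deduces directness from torsion-freeness, counts ranks, and descends by Galois equivariance. If anything, your bookkeeping of the units $u_k=(-\omega)^{\pm k}$ as elements of the full order-six group $\Z[\omega]^{\times}$ is slightly more careful than the paper's statement that $\sigma^{*}$ acts as $\omega^{-k}$, under which $k=1$ and $k=4$ would not be separated.
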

\begin{proof} Let $\sigma$ be the  $K$-linear automorphism of $K(t)$ sending $t\mapsto -\omega t$. Then $\sigma$ acts on the Galois-module $E_{A,B}(\overline{\Q}(t))$.
Now $\sigma^*$ acts on the image of $E_{A,B,k,1}$ by sending $(x,y)$ to  $(\omega^{k}x, (-1)^ky)$, i.e., $\sigma^*$ acts as $\tau^k$. In particular, the pullback of $E_{A,B,k,1}(K(s))$ is contained in  the kernel of $\sigma-\tau^{k}$. 
The intersection of two such kernels for different values of $k$ is zero, hence the sum is  a direct sum.
If $K$ is algebraically closed then both groups have rank $8$, hence it is a subgroup of finite index. If $K$ is not algebraically closed, then the Galois-representations are respected, hence the above statement holds  for any field between $K$ and $\overline{K}$.
\end{proof}

\begin{remark} Similarly one can show that the following inclusions of free $\Z[\omega]$-modules have finite index.
\[ \begin{array}{r}
E_{A,B,3,1}(K(s)) \subset E_{A,B,0,2}(K(s))\\
E_{A,B,1,1}(K(s))\oplus E_{A,B,4,1}(K(s))\subset E_{A,B,2,2}(K(s))\\
E_{A,B,2,1}(K(s))\subset E_{A,B,4,2}(K(s))\\
E_{A,B,2,1}(K(s))\oplus E_{A,B,4,1}(K(s))\subset E_{A,B,3,0}(K(s))\\
E_{A,B,1,1}(K(s))\oplus E_{A,B,3,1}(K(s))\subset E_{A,B,3,3}(K(s))\end{array}
\]
\end{remark}

The $\Z$-module $\oplus_{k=1}^4 E_{A,B,k,1}(\overline{\Q}(s))$ is torsion-free and has finite index in the abelian group $E_{A,B}(\overline{\Q}(t))$. Hence the rank of $E_{A,B}(\Q(t))$ is the sum of the ranks of the Galois invariant subgroups of $E_{A,B,k,1}(\overline{\Q}(s))$, for $k=1,2,3,4$. 
Moreover,  each of  $E_{A,B,k,1}(\overline{\Q}(s))$ has rank 2, and is a $\Z[\omega]$-module, hence the the invariant subgroups turn out to have rank 0 or 1.
It remains now to determine each of these two dimensional representations. For this we include two probably well-known lemmata.

\begin{lemma}\label{lemext} Suppose $(-3)$ is not a square in $K$. Let $L=K(\sqrt{-3})$. For every  $u\in K^*$  we have that  $u$ is a square in $L$ if and only if $u$ or $-3u$ is a square in $K$.
Similarly, we have that $u$ is a cube in $L$ if and only if $u$ is a cube in $K$.
\end{lemma}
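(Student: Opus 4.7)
The plan is to exploit throughout that $[L:K]=2$, so that every field strictly between $K$ and $L$ equals one of the two, and that $\{1,\sqrt{-3}\}$ is a $K$-basis of $L$. Both statements reduce to careful applications of this principle.

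For the square statement, I would first dispatch the easy direction: squares in $K$ are trivially squares in $L$, and if $-3u=v^2$ for some $v\in K$ then $u=(v/\sqrt{-3})^2$ is a square in $L$. For the converse, write $u=\alpha^2$ with $\alpha\in L\setminus K$ (the case $\alpha\in K$ being already handled); then $K(\alpha)$ is a quadratic subextension of $L/K$, hence $K(\alpha)=L$. Expanding $\alpha=a+b\sqrt{-3}$ with $a,b\in K$ and $b\neq 0$, I would square and compare coordinates: the coefficient of $\sqrt{-3}$ in $\alpha^2$ is $2ab$, which must vanish since $u\in K$, forcing $a=0$, so $u=-3b^2$ and thus $-3u=(3b)^2$ is a square in $K$.

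For the cube statement, the forward direction is immediate. Conversely, assuming $\alpha\in L$ satisfies $\alpha^3=u$, I would consider the minimal polynomial of $\alpha$ over $K$, which divides $x^3-u$ in $K[x]$. The key observation is that a cubic polynomial over a field is either irreducible or has a linear factor there. In the first case $[K(\alpha):K]=3$, incompatible with the inclusion $K(\alpha)\subseteq L$ since $[L:K]=2$; in the second case, $x^3-u$ has a root in $K$, so $u$ is already a cube in $K$.

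There is no substantial obstacle; both arguments are elementary Galois theory on a quadratic extension. The only subtle input is the hypothesis that $-3$ is not a square in $K$, which is precisely what guarantees $[L:K]=2$ and thereby makes the subfield-trichotomy and degree-count arguments work.
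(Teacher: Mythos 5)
Your proof is correct. For the square statement you take essentially the paper's route: expand $(a+b\sqrt{-3})^2$ and force the $\sqrt{-3}$-coordinate $2ab$ to vanish, so that either $b=0$ (a square in $K$) or $a=0$ (so $-3u=(3b)^2$); you also spell out the easy direction, which the paper leaves implicit. For the cube statement your argument is genuinely different. The paper expands $(a+b\sqrt{-3})^3$, extracts the condition $3b(a^2-b^2)=0$, and then must still check that in the case $a=\pm b$ the element $v^3=-8a^3$ is a cube in $K$ --- a verification it asserts without displaying. Your degree argument avoids all of this: the minimal polynomial of a cube root $\alpha\in L$ of $u$ divides $x^3-u$ in $K[x]$ and has degree at most $2$ because $K(\alpha)\subseteq L$, so $x^3-u$ necessarily has a linear factor over $K$ and $u$ is a cube in $K$. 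This is cleaner and proves the stronger statement that for \emph{any} quadratic extension $L/K$ an element of $K$ that is a cube in $L$ is already a cube in $K$; the constant $-3$ only genuinely enters in the square statement. Both approaches are valid, and your use of the hypothesis (that $-3$ is not a square, hence $[L:K]=2$) is exactly where it is needed.
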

\begin{proof}
Let $a,b\in K$ and consider  $v=(a+\sqrt{-3}b)\in L$ then $v^2=a^2-3b^2+\sqrt{-3}ab$. Hence $v^2\in K$ if and only if $ab=0$. Hence $v^2$ is already a square in $L$ or $v^2$ equals $-3$ times a square in $L$. 

Similarly, $v^3=a^3+3 \sqrt{-3} a^2b-9 ab^2-3\sqrt{-3} b^3$.
Hence $v^3\in K$ if and only if $3ba^2+(-3)b^3=0$. Hence $b=0$ or $b^2=a^2$. In both cases $v^3$ is a cube in $K$.\end{proof}

\begin{lemma}\label{leminv}
Let $M$ be a free $\Z[\omega]$-module of rank 1 together with an action of
$G:=\Gal(\overline{K}/K)$, such that for all $m\in M,g\in G$ we have $g(\omega m)=g(\omega)g(m)$.

If $\omega \in K$ then $M^G\neq 0$ if and only if $M^G=M$. 

If $\omega \not \in K$ then let $H= \Gal(\overline{K}/K(\omega))$. % 
If $M^H$ is nonzero then $M^G$ is nonzero and is a free abelian group of rank 1.
\end{lemma}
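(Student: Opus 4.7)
The plan rests on two elementary facts. First, the unit group $\Z[\omega]^\times$ equals $\{\pm 1, \pm \omega, \pm \omega^2\}$. Second, any $\Z[\omega]$-linear automorphism of a free rank-one $\Z[\omega]$-module is multiplication by one of these units. These facts reduce questions about $\Z[\omega]$-linear group actions on $M$ to tracking a cocycle with values in a finite group.

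I would first treat the case $\omega \in K$. Every $g \in G$ fixes $\omega$, so the compatibility $g(\omega m) = g(\omega)\, g(m)$ makes the $G$-action $\Z[\omega]$-linear. Fixing a generator $e$ of $M$, each $g$ acts as multiplication by some unit $\zeta_g \in \Z[\omega]^\times$. The identity $g(\alpha e) = \alpha e$ for a nonzero $\alpha \in \Z[\omega]$ then forces $\zeta_g = 1$, so the existence of a single nonzero fixed element forces $G$ to act trivially on $M$, giving $M^G = M$. In the case $\omega \notin K$, the same argument applied to $H = \Gal(\overline{K}/K(\omega))$ (which does fix $\omega$) upgrades $M^H \neq 0$ to $M^H = M$; the $G$-action on $M$ then descends to an action of $G/H = \Gal(K(\omega)/K)$, generated by an involution $\tau$ obeying the semilinear relation $\tau(\omega m) = \bar\omega\, \tau(m)$. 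In particular $M^G = M^\tau$.

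To finish, I would pass to $V := M \otimes_\Z \Q$, a two-dimensional $\Q$-vector space on which $\tau$ is a $\Q$-linear involution, yielding a decomposition $V = V^+ \oplus V^-$. The semilinear relation implies that multiplication by $\omega - \bar\omega$ swaps $V^+$ and $V^-$; since $\omega - \bar\omega$ is a nonzerodivisor on $V$, this map is invertible, so $\dim V^+ = \dim V^- = 1$. Any $\Q$-subspace of $V$ meets the lattice $M$ in a subgroup of the same rank, so $M^\tau = M \cap V^+$ is free of rank one and nonzero, as required. The only step that genuinely deserves care is the semilinearity bookkeeping in this final paragraph; all the rest is an immediate consequence of the description of $\Z[\omega]^\times$.
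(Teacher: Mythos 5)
Your proof is correct and takes essentially the same route as the paper's: reduce to the involution generating $\Gal(K(\omega)/K)$ acting on $M\otimes_{\Z}\Q$, diagonalize it, and use the semilinearity to conclude that both eigenspaces are one-dimensional, so the invariants form a rank-one lattice. You merely supply details where the paper is terse, namely the unit-group argument for the case the paper calls straightforward, and the explicit mechanism (multiplication by $\omega-\omega^2$ interchanging the two eigenspaces) behind the paper's assertion that both eigenvalues $\pm 1$ actually occur.
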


\begin{proof}
The case $\omega \in K$ is straightforward. If $\omega \not \in K$ and $M^H$ nonzero then $M^H=M$ by the previous case.

Let $\sigma$ be a generator of $\Gal(K(\omega)/K)$. Then  $\sigma^*:M\otimes_{\Z}\Q \to M \otimes_{\Z}\Q$ has order two, hence is diagonalizable and its eigenvalues are $\pm 1$.

Since $\sigma(\omega P)=\sigma(\omega) \sigma(P)$ by assumption we obtain that $\sigma^*$ has two distinct eigenvalues on this two-dimensional vector space.
Hence the $1$ eigenspace is one-dimensional and therefore $M^G$ is free of $\Z$-rank one.
\end{proof}

In order to find a generator for the geometric Mordell--Weil group of $E_{A,B,k,1}$ it suffices to write $As^{k+1}+Bs^k$ as a sum of a square and a cube in $\overline{\Q}[t]$. For $k=2$ this is automatic, for $k=1$ this can be done by completing the square.
We make this precise in the following proposition. 

\begin{proposition} Let $R=\Z$ if $\omega \not \in K$ and let $R=\Z[\omega]$ if $\omega \in K$. 
For $k\in \{1,2,3,4\}$ we have that 
$E_{A,B,k,1}(K(s))$ is nontrivial if and only if
\begin{enumerate}
\item $k=1$; $4AB\in (K^*)^3$ and  $A\in (K^*)^2$ or $-3A \in (K^*)^2$.
\item $k=2$; $A\in (K^*)^3$ and $B\in (K^*)^2$ or $-3B \in (K^*)^2$.
\item $k=3$; $B\in (K^*)^3$ and $A\in (K^*)^2$ or $-3A \in (K^*)^2$.
\item $k=4$; $4AB\in (K^*)^3$ and $B\in (K^*)^2$ or $-3B \in (K^*)^2$.
\end{enumerate}
Moreover, if this group is nontrivial then it is a free $R$-module of rank 1.
\end{proposition}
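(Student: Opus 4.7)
By Lemma \ref{lemrk}, $M := E_{A,B,1,k}(\overline{K}(t))$ is a free $\Z[\omega]$-module of rank one, so any nonzero section generates $M$ as a $\Z[\omega]$-module up to finite index. Let $H = \Gal(\overline{K}/K(\omega))$. The inclusion $M^G \subset M^H$ together with Lemma \ref{leminv} gives $M^G \neq 0 \iff M^H \neq 0$, and the $\omega \in K$ half of Lemma \ref{leminv} applied to the base field $K(\omega)$ gives $M^H \neq 0 \iff M^H = M \iff H$ acts trivially on $M$. Since $M$ is torsion-free and $\omega \in K(\omega)$, this last condition is equivalent to $H$ fixing any chosen nonzero $P_k \in M$, equivalently $P_k$ being defined over $K(\omega)$. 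My plan is to exhibit an explicit $P_k$ for each $k \in \{1,2,3,4\}$ and determine when it is $K(\omega)$-rational.

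\textbf{Explicit sections.} Finding $P_k$ reduces to writing $At^{k+1} + Bt^k = y(t)^2 - x(t)^3$ for some $x,y \in \overline{K}[t]$. For $k=2$, take $y_2 = \sqrt{B}\, t$ and $x_2 = -A^{1/3} t$; one checks $y_2^2 - x_2^3 = Bt^2 + At^3$. For $k=1$, completing the square gives
\[
At^2 + Bt = A\left(t + \frac{B}{2A}\right)^2 - \frac{B^2}{4A},
\]
so $y_1 = \sqrt{A}\,(t + B/(2A))$ and $x_1$ is a cube root of $B^2/(4A)$. For $k = 3, 4$, the involution $t = 1/s$, $x = X/s^2$, $y = Y/s^3$ identifies $E_{A,B,1,k}(\overline{K}(t))$ with $E_{B,A,1,5-k}(\overline{K}(s))$; transporting the sections for $k = 2, 1$ under $A \leftrightarrow B$ produces $P_3 = (-B^{1/3} t,\ \sqrt{A}\, t^2)$ and a $P_4$ whose coordinates involve $\sqrt{B}$ and $(A^2/(4B))^{1/3}$. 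Non-triviality of each $P_k$ is automatic since $M$ is torsion-free.

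\textbf{Translation and conclusion.} By Lemma \ref{lemext}, $\sqrt{A} \in K(\omega)$ iff $A$ or $-3A$ lies in $(K^*)^2$, and cubes in $K(\omega)$ coincide with cubes in $K$. In $K^*/(K^*)^3$, which has exponent $3$, the identity $B^2/(4A) \cdot 4AB = B^3$ yields $B^2/(4A) \in (K^*)^3 \iff 4AB \in (K^*)^3$, and analogously $A^2/(4B) \in (K^*)^3 \iff 4AB \in (K^*)^3$. Applying these equivalences to the square and cube roots appearing in the coordinates of $P_k$ yields exactly conditions (1)--(4) of the proposition. The final assertion is immediate from Lemma \ref{leminv}: if $\omega \in K$, then $M^G = M$ is free of $\Z[\omega]$-rank one; if $\omega \notin K$, then $M^G$ is free of $\Z$-rank one. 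The only real nuisance I foresee is keeping the cube classes and sign conventions straight so that the listed coordinates actually satisfy the Weierstrass equation of $E_{A,B,1,k}$ rather than that of one of its quadratic or cubic twists.
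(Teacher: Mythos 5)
Your proposal is correct and follows essentially the same route as the paper: the same explicit generators (e.g.\ $(\gamma,\sqrt{A}(t+B/2A))$ for $k=1$ and $(-A^{1/3}t,\sqrt{B}\,t)$ for $k=2$, with $k=3,4$ obtained via $t\mapsto 1/t$ and swapping $A$ and $B$), the same reduction to $K(\omega)$-rationality of the generator, and the same translation back to $K$ via Lemma~\ref{lemext} and Lemma~\ref{leminv}.
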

\begin{proof}
From Lemma~\ref{lemrk} it follows that $E_{A,B,k,1}(\overline{\Q}(s))$ is a free $\Z[\omega]$-module of rank 1. We start by determining a generator of a submodule of finite index.

Suppose first that $k=1$. Let $\alpha\in \overline{\Q}$ be a square root of $A$, let $\gamma \in \overline{\Q}$ be a third root of $\frac{B^2}{4A}$.
Then the (free, rank one) $\Z[\omega]$-module $E_{A,B,1,1}(\overline{\Q}(s))$ is generated up to finite index by
  \[P:=(\gamma,\alpha(s+B/2A)).\]
 Hence if $E_{A,B,1,1}(K(s))$ is non-trivial then $E_{A,B,1,1}(K(\omega)(s))=E_{A,B,1,1}(\overline{\Q}(s))$. Therefore $P\in E_{A,B,1,1}(K(\omega)(s))$. In particular, $A$ is a square in $K(\omega)$  and $4AB$ is a cube in $K( \omega)$. This is equivalent to $A$ or $-3A$ being a square in $K$ and $4AB$ a cube in $K$. (If $\omega \in K$ then $-3$ is a square, otherwise apply Lemma~\ref{lemext}.)
 
 For the converse direction, note that if $4AB\in (K^*)^3$ and $A$ or $-3A$ is in $(K^*)^2$, then $P\in E(K(\omega)(s))$. Lemma~\ref{leminv} implies now that $R$-rank of $E(K(s))$ equals one.
    
The case $k=4$ can be obtained from $k=1$ by applying the field automorphism $s\mapsto \frac{1}{s}$ and  interchanging the role of $A$ and $B$.
The case $k=2$ can be obtained  by taking  $\alpha$ be a third root of $A$, $\beta$ be a square root of $B$ and using the point
$(-\alpha s,\beta s)$ in $E_{A,B,2,1}(\overline{\Q}(s))$. The case $k=3$ can be obtained from the case $k=2$ by applying the field automorphism $s\mapsto \frac{1}{s}$ and  interchanging the role of $A$ and $B$.\end{proof}

Collecting everything we obtain the following result:
\begin{proposition} Define integers $r_1,\dots,r_4\in \{0,1\}$ as follows
\begin{itemize}
\item  $r_1=1$ if $4AB\in (\Q^*)^3$ and  $A\in(\Q^*)^2$ or $-3A\in(\Q^*)^2$; $r_1=0$ otherwise.
\item  $r_2=1$ if $A\in (\Q^*)^3$  and $B\in(\Q^*)^2$ or $-3B\in(\Q^*)^2$; $r_2=0$ otherwise.
\item  $r_3=1$ if $B\in (\Q^*)^3$  and  $A\in(\Q^*)^2$ or $-3A\in(\Q^*)^2$; $r_3=0$ otherwise.
\item  $r_4=1$ if $4AB\in (\Q^*)^3$  and $B\in(\Q^*)^2$ or $-3B\in(\Q^*)^2$; $r_4=0$ otherwise.\end{itemize}
Then
\[ \rank E_{A,B}(\Q(t))=r_1+r_2+r_3+r_4.\]
\end{proposition}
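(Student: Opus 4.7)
The plan is to combine the previous two results, specialized to $K=\Q$. Since $\omega\notin\Q$, the coefficient ring $R$ of the previous proposition equals $\Z$, so for each $k\in\{1,2,3,4\}$ the group $E_{A,B,1,k}(\Q(t))$ is either trivial or free of $\Z$-rank one, with the nontriviality criterion stated there matching word-for-word the condition defining $r_k$. This immediately gives
\[ \rank_\Z E_{A,B,1,k}(\Q(t)) = r_k \qquad \text{for } k=1,2,3,4. \]

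The second ingredient is the earlier finite-index lemma applied with $K=\Q$: the natural map
\[ \bigoplus_{k=1}^4 E_{A,B,1,k}(\Q(t)) \hookrightarrow E_{A,B}(\Q(t)) \]
realizes the left-hand side as a subgroup of finite index. Because rank is additive over direct sums and invariant under passage to finite-index subgroups, I can sum the equalities above to conclude
\[ \rank E_{A,B}(\Q(t)) = \sum_{k=1}^4 \rank E_{A,B,1,k}(\Q(t)) = r_1+r_2+r_3+r_4, \]
which is precisely the claim.

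The only step that deserves a second glance is the finite-index statement over $\Q$ (rather than over $\overline{\Q}$, where it was originally derived via the Shioda--Tate rank count). This is the content of the final sentence in the proof of that lemma: both the inclusion and the $\sigma^\ast$-eigenspace decomposition that shows the sum is direct are Galois-equivariant, so taking $\Gal(\overline{\Q}/\Q)$-invariants preserves both the directness and the finite cokernel. Beyond this, the argument is pure bookkeeping, and I do not foresee any genuine obstacle: all the geometric and arithmetic content has already been packaged into the previous proposition via the explicit expressions of $At^{k+1}+Bt^k$ as a sum of a cube and a square.
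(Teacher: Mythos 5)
Your proposal is correct and follows essentially the same route as the paper, which states the proposition with "Collecting everything we obtain" and relies implicitly on exactly the two ingredients you cite: the finite-index direct-sum lemma (taken over $\Q$, equivalently by passing to Galois invariants of the statement over $\overline{\Q}$) and the proposition characterizing when each $E_{A,B,1,k}(K(t))$ is nontrivial. Your extra remark justifying the finite-index statement over $\Q$ via Galois-equivariance is precisely the content of the final sentence of the paper's proof of that lemma.
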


%\begin{remark} One can look for values of $A,B$ for which more than one $k$ contributes. If $k=2$ and $k=3$ contribute then both $A$ and $B$ are cubes, hence $4AB$ is not a cube. 
%This implies that rank 4 is impossible and rank 3 is only possible if $k=1, k=4$ and one of $k=2,k=3$ contribute.
%\end{remark}

\begin{proposition} Let $r=\rank E_{A,B}(\Q(t))$. Denote with $\overline{A}$ and $\overline{B}$ the images of $A$ and $B$ in $\Q^*/(\Q^*)^6$.
Then
\begin{enumerate}
\item $r\leq 3$.
\item $r=3$ if and only if  $\overline{A}\in \{1,(-3)^3\} $ and $\overline{B}\in \{2^4,(-3)^3 2^4\}$ or $\overline{B}\in \{1,(-3)^3\}$ and $\overline{A}\in \{2^4,(-3)^3 2^4\}$.
\item $r=2$ if and only if $r\neq 3$ and one of
\begin{enumerate}
\item $B$ is a square or $-3$ times a square and $\overline{A}\overline{B}\in \{2^4,-2^43^3\}$;
\item $\overline{A}\in \{2^4 ,2^4(-3)^3 \}$ and $B$ is a cube;
\item $\overline{A},\overline{B} \in \{1,(-3)^3\}$;
\item One of the above with $A$ and $B$ interchanged;
\end{enumerate}
holds.
\end{enumerate}
\end{proposition}

\begin{proof} From the previous Proposition it follows that $r\leq 4$. If $r=4$ then $r_i=1$ for $i=1,2,3,4$, hence $A,B$ and $4AB$ would be all third powers. This is impossible. Hence $r\leq 3$.

The `if'-part of the second point is straightforward. To prove the `only if'-part, note that  $r=3$ is only possible if  $r_1=r_4=1$. Suppose first that  $r_2=1$ holds then $A$ is a cube and  $A$ or $-3A$ is a square, hence $\overline{A}\in  \{1,(-3)^3\}$, moreover $4B$ is a cube and $B$ or $-3B$ is a square, hence $\overline{B}\in \{2^4,(-3)^32^4\}$. The case $r_1=r_3=r_4=1$ is the same  but with $A$ and $B$ interchanged.

The `if'-part of the third bullet point is straightforward. For the `only if'-part, we start with the case $r_1=r_4=1$. In this case $B$ is a square or $-3$ times a square, and $A$ and $B$ are equal modulo squares or differ by $-3$ times a square.
If $A$ and $B$ are equal modulo squares then $4AB$ is a square. Since it is also a third power we find that $4AB$ is a sixth power, and therefore $\overline{A}\overline{B}=2^4$. If $A$ and $B$ differ by $-3$ modulo squares then $4AB$ is $-3$ times a square. Since this is also a third power we find $4AB$ is $(-3)^3$ modulo sixth powers. Hence $\overline{A}\overline{B}=-2^43^3$.

In the remaining cases we have that at least one of $r_1,r_4$ vanishes. Using the symmetry we may assume that $r_4=0$.

If $r_1=r_2=1$ then $r_4=1$ and we have $r=3$. Hence we can exclude this case.

If $r_1=r_3=1$ then $A$ or $-3A$ is a square and $4A$ and $B$  are cubes. This implies that $\overline{A}\in \{2^4,(-3)^3\}$.  Hence we are in the second case.

If $r_2=r_3=1$ then $A$ is a cube and either $A$ is a square or $-3A$ is a square. Hence $\overline{A}\in \{1,(-3)^3\}$. Similarly, $\overline{B} \in \{1,(-3)^3\}$. Hence we are in the third case.
\end{proof}

\begin{remark} Fix a field $K$ of characteristic zero. Let $E/K(t)$ be the generic fiber of a rational elliptic surface. Fix a minimal Weierstrass equation
\[ y^2=x^3+f(t) x+g(t)\]
for $E/K(t)$. The minimality of this equation implies that  $f$ and $g$ are polynomials of degree at most 4 and 6 respectively.
From the classification of  Mordell--Weil groups of rational elliptic surfaces it follows that the \emph{geometric} Mordell--Weil group $E(\overline{K}(t))$ is generated by  points of the form $(a_2t^2+a_1t+a_0,b_3t^3+b_2t^2+b_1t+b_0)$, with $a_i,b_j \in \overline{K}$, and there are at most $240$ such points, see \cite[Theorem 8.33]{SchShi}. The Galois action  on these points  determines the Galois action on the Mordell--Weil group $E(\overline{K}(t))$. However,  there might be  non-trivial points in $E(K(t))$ even when none of the above mentioned points is fixed by the Galois group. This happens for example when $E=E_{A,B,k,1}$ and $-3A$ is a square. In particular the assumption that $K$ is algebraically closed is crucial for \cite[Theorem 8.33]{SchShi} to hold.

Substituting a generic such point in the Weierstrass equation and then considering the coefficients of $t^6,t^5,\dots, t,1$
 yields 7 polynomial equation in 7 variables. These equations define a zero-dimensional scheme $\Sigma$. Suppose now $K=\Q$. In the generic case the $K$-scheme $\Sigma$ has length 240 and consists of a single point $P$ of degree 240. The Galois group of the Galois-closure of $\Q(P)/\Q$ is  $W(E_8)$. Hence this approach is not feasible for most examples of rational elliptic surfaces over $\Q$. However, there are a few well-known exceptions.
 \begin{enumerate}
\item
 If there are few singular fibers then the length of $\Sigma_{\red}$ might be smaller than 240, and this scheme tend to be easier to study. E.g., for $E_{A,B,k,1}$ with $k\in \{1,2,3,4\}$ we obtain that $\Sigma_{\red}$ has length 6. In particular, for $k=2$  we obtain that $\Sigma_{\red}$ is defined by the equations  $a_1^3+A=-b_1^2+B=a_2=a_1=a_0=b_1=b_0=0$.  
 
 \item If the discriminant has a rational zero then one can parametrize the corresponding singular fiber. In this case several of the 7 equations turn out to be linear in one of the variables. See, e.g., \cite[Section 4]{KloKuw}.

\item In certain examples  $\Sigma$ decomposes as a union of several subschemes. This happens for example when $E$ admits an extra automorphism (as in the above example) or there is another rational elliptic surface, which admits a base change which is birational to the surface under consideration. 
\end{enumerate}

 Desjardins and Naskr\k{e}cki show that  for $E_{A,B}$, i.e.,  $f=0, g(t)=At^6+B$, one obtains the third case. They decompose $\Sigma$  as a union of several subschemes. For example they identify schemes coming from  $E_{A,B,k,1}$ for $k=1,2,3,4$. However, working with $E_{A,B}/K(t)$ directly is less advantageous since  there are also redundant subschemes of $\Sigma_{\red}$. E.g., the scheme $\Sigma'$ obtained when studying  $E_{A,B,2,2}(K(t))$ contains the schemes coming from $E_{A,B,1,1}$ and $E_{A,B,4,1}$ and one additional nontrivial subscheme.  However, this latter scheme has a rational  point  only if the former two have such a point. This leads to redundant information, and complicates analysis of $\Sigma$.

It would be interesting to have examples of rational elliptic surfaces with  Mordell--Weil rank 6, 7 or 8 and where the above mentioned approach of  \cite{Bartosz} is practical and where we are not in one of the three exceptional classes. \end{remark}

\bibliographystyle{plain}
\bibliography{remke2}

\begin{thebibliography}{1}

\bibitem{ToMe}
J.~Chahal, M.~Meijer, and J.~Top.
\newblock Sections on certain {$j=0$} elliptic surfaces.
\newblock {\em Comment. Math. Univ. St. Paul.}, 49:79--89, 2000.

\bibitem{Bartosz}
J.~Desjardins and B.~Naskr\k{e}cki.
\newblock Geometry of the del {P}ezzo surface {$y^2 = x^3 + A m^6 + B n^6$}.
\newblock {\em Ann. Inst. Fourier (Grenoble)}, 74(5):2231--2274, 2024.

\bibitem{KloKuw}
R.~Kloosterman.
\newblock Explicit sections on {K}uwata's elliptic surfaces.
\newblock {\em Comment. Math. Univ. St. Pauli}, 54:69--86, 2005.

\bibitem{SchShi}
M.~Sch\"utt and T.~Shioda.
\newblock {\em Mordell-{W}eil lattices}, volume~70 of {\em Ergebnisse der
  Mathematik und ihrer Grenzgebiete. 3. Folge.}
\newblock Springer, Singapore, 2019.

\end{thebibliography}

\end{document}